\newtheorem{prop}{Proposition}
\newenvironment{proofof}[1]{\medskip\noindent\textit{Proof of #1.}}{\mbox{}\hfill\qed\par\medskip}
\newcommand{\df}[1]{{\it #1}}
\newcommand{\CC}{{\ensuremath{\mathcal{C}}}}
\DeclareMathOperator{\qn}{qn}
\newcommand{\SCon}{{\ensuremath{~\cup~}}}
\renewcommand{\orcidID}[1]{\href{https://orcid.org/#1}{\includegraphics[scale=.03]{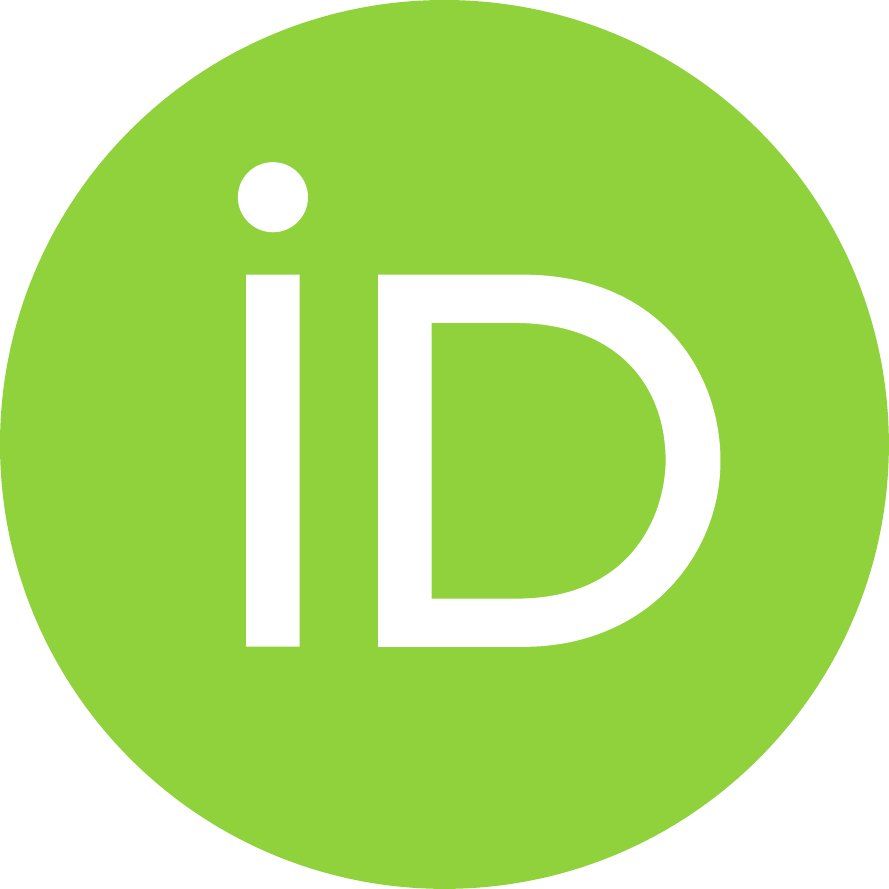}}}
\begin{document}

\title{Queue Layouts of Two-Dimensional Posets}

\author{
	Sergey~Pupyrev\orcidID{0000-0003-4089-673X}
}

\authorrunning{S. Pupyrev}

\institute{
	\email{spupyrev@gmail.com}
}

\date{}
\maketitle

\begin{abstract}
	The queue number of a poset is the queue number of its cover graph when the vertex order is
	a linear extension of the poset. Heath and Pemmaraju conjectured that 
	every poset of width $w$  has queue number at most $w$. The conjecture has been
	confirmed for posets of width $w=2$ and for planar posets with $0$ and $1$.
	In contrast, the conjecture has been refused by a family of general (non-planar) posets
	of width $w>2$.\newline 
	In this paper, we study queue layouts of two-dimensional posets. First, we construct a 
	two-dimensional poset of width $w > 2$ with queue number $2(w - 1)$, thereby
	disproving the conjecture for two-dimensional posets. Second, we show an upper
	bound of $w(w+1)/2$ on the queue number of such posets, thus improving the previously 
	best-known bound of \linebreak
	$(w-1)^2+1$ for every $w > 3$.
	
  \keywords{poset \and queue number \and width \and dimension \and linear extension}
\end{abstract}

\section{Introduction}

Let $G$ be a simple, undirected, finite graph with vertex set $V$ and edge set $E$, and 
let $\sigma$ be a total order of $V$. 
For a pair of distinct vertices $u$ and $v$, we write $u <_{\sigma} v$ (or simply $u < v$), if $u$ precedes
$v$ in $\sigma$. We also write $[v_1, v_2, \dots, v_k]$ to denote that $v_i$ precedes $v_{i+1}$
for all $1 \le i < k$; such a subsequence of $\sigma$ is called a \df{pattern}.
Two edges $(u, v) \in E$ and $(a, b) \in E$ \df{nest} if $u <_{\sigma} a <_{\sigma} b <_{\sigma} v$. 
A $k$-queue layout of $G$ is a total order of $V$ and a partition of $E$ into subsets $E_1, E_2, \dots, E_k$,
called \df{queues}, such that no two edges in the same set $E_i$ nest. The \df{queue number}
of $G$, $\qn(G)$, is the minimum $k$ such that $G$ admits a $k$-queue layout.
Equivalently, the queue number is the minimum $k$ such that there exists an order $\sigma$
containing no \df{$(k+1)$-rainbow}, that is, a set of edges $\{(u_i, v_i); i=1,2,\dots, k+1\}$
forming pattern $[u_1, \dots, u_{k+1}, v_{k+1}, \dots, v_1]$ in $\sigma$.

Queue layouts can be studied for partially ordered sets (or simply \df{posets}).
A poset over a finite set of elements $X$ is a transitive and asymmetric binary relation $<$ on $X$.
The main idea is that given a poset, one should lay it out respecting the relation.
Two elements $a, b$ of a poset, $P = (X, <)$, are called \df{comparable} if $a < b$ or $b < a$, 
and \df{incomparable}, denoted by $a \parallel b$, otherwise.
Posets are visualized by their diagrams: Elements are placed as points
in the plane and whenever $a < b$ in the poset and there is no element $c$ with
$a < c < b$, there is a curve from $a$ to $b$ going upwards (that is $y$-monotone); see \cref{fig:introA}.
Such relations, denoted by $a \prec b$, are known as \df{cover relations}; they are 
essential in the sense that they are not implied by transitivity. 
The directed graph implicitly defined by such a diagram is the cover
graph $G_P$ of the poset $P$. Given a poset $P$, a linear extension $L$ of $P$ is a total order on
the elements of $P$ such that $a <_L b$, whenever $a <_P b$. Finally, the queue number of a 
poset $P$, denoted by $\qn(P)$, is the smallest
$k$ such that there exists a linear extension $L$ of $P$ for which the resulting layout
of $G_P$ contains no $(k + 1)$-rainbow; see \cref{fig:introC}.

Queue layouts of posets were first studied by Heath and Pemmaraju~\cite{HP97}, who provided bounds
on the queue number of posets in terms of their \df{width}, that is, the maximum number of
pairwise incomparable elements. In particular, they observed that the size of a rainbow
in a queue layout of a poset of width $w$ cannot exceed $w^2$, and therefore, $\qn(P) \le w^2$
for every poset $P$. Furthermore, Heath and Pemmaraju conjectured that $\qn(P) \le w$ for a 
width-$w$ poset $P$. The study of the conjecture received a notable attention in the recent years.
Knauer, Micek, and Ueckerdt~\cite{KPT18} confirmed the conjecture for posets of width $w=2$ and 
for planar posets with $0$ and $1$. Later Alam et al.~\cite{Alam20} constructed a
poset of width $w \ge 3$ whose queue number is $w+1$, thus refuting the conjecture for general non-planar posets. 
In the same paper Alam et al. improved the upper bound by showing that $\qn(P) \le (w-1)^2+1$
for all posets $P$ of width $w$. Finally, Felsner, Ueckerdt, and Wille~\cite{FUW21}
strengthened the lower bound by presenting a poset of width $w>3$ with $\qn(P) \ge w^2 / 8$.

In this short paper we refine our knowledge on queue layouts of posets by improving the
known upper and lower bounds of the queue number of two-dimensional posets.
Recall that the \df{dimension} of poset $P$ is the least positive integer $d$ for which there 
are $d$ linear extensions (\df{realizers}) $L_1, \dots, L_d$ of $P$ so
that $a < b$ in $P$ if and only if $a < b$ in $L_i$ for every $i \in \{1, \dots, d\}$.
\df{Two-dimensional} posets are described by realizers $L_1$ and $L_2$ and 
often represented by \df{dominance drawings} in which the coordinates of the elements are
their positions in $L_1$ and $L_2$; see \cref{fig:introB}.
We emphasize that the existing lower bound constructions~\cite{Alam20,FUW21} are not 
two-dimensional. Thus, Felsner et al.~\cite{FUW21} asked whether the
conjecture of Heath and Pemmaraju holds for posets with dimension $2$. Our first result
answers the question negatively.

\begin{theorem}
	\label{thm:lb}
	There exists a two-dimensional poset $P$ of width $w>1$ with $\qn(P) \ge 2(w-1)$.
\end{theorem}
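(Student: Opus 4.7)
The plan is to construct an explicit family of two-dimensional posets $P_w$ of width $w$ and to show that every linear extension of $P_w$ contains a rainbow of size $2(w-1)$ in the cover graph; by the equivalence between queue number and forbidden rainbows recalled in the introduction, this yields $\qn(P_w)\ge 2(w-1)$.

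I would describe $P_w$ by its two realizers $L_1,L_2$, equivalently by a dominance drawing of the poset. A natural starting point is to take two antichains $A=\{a_1,\dots,a_w\}$ and $B=\{b_1,\dots,b_w\}$ with every element of $A$ below every element of $B$, via
\[
L_1 = a_1,\dots,a_w,b_1,\dots,b_w,
\qquad
L_2 = a_w,\dots,a_1,b_w,\dots,b_1.
\]
This already yields a $K_{w,w}$ cover graph, but by itself forces only a rainbow of size $w$ (matching $\qn(K_{w,w})=w$). To boost the bound to $2(w-1)$ I would augment $P_w$ with a carefully chosen staircase of ``middle'' elements, together with the direct cover edges they induce, placed so that certain cover edges span across layers while preserving width exactly $w$ and two-dimensionality; both properties can be checked directly on the dominance drawing.

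The core of the proof is the rainbow bound. Fix an arbitrary linear extension $\sigma$ of $P_w$. The restrictions of $\sigma$ to $A$ and $B$ are permutations $\pi$ and $\rho$, and the bipartite cover edges between $A$ and $B$ form a $K_{w,w}$-type graph in which an Erd\H{o}s--Szekeres argument on $\pi,\rho$ produces $w-1$ pairwise nesting edges. Separately, the long cross-layer cover edges introduced by the augmentation force another nesting chain of $w-1$ cover edges. The last step is to show that these two chains can always be chosen compatibly so that together they form one rainbow of size $2(w-1)$.

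The principal obstacle is precisely this last compatibility step. In naive layered constructions, cover edges of different layers occupy disjoint intervals of $\sigma$ and therefore never nest with one another, capping the forced rainbow at $w$. Designing $P_w$ so that enough cover edges bridge the layers --- enough to link the two nested chains into a single one --- while simultaneously keeping the width equal to $w$ and the poset two-dimensional, is the main technical difficulty of the construction.
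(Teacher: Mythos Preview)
Your proposal is not a proof but a description of desiderata: you name a base construction, acknowledge it only forces a $w$-rainbow, and then defer the entire content of the theorem to an unspecified ``augmentation'' whose existence you do not establish. You explicitly identify the compatibility step --- merging two separate nested families into a single $2(w-1)$-rainbow while keeping width $w$ and dimension $2$ --- as ``the main technical difficulty'' and then leave it unresolved. That step is the theorem. The Erd\H{o}s--Szekeres invocation is also off: for $K_{w,w}$ with all of $A$ before all of $B$ you get a $w$-rainbow directly (pick the $t$-th element of $A$ with the $(w{-}t{+}1)$-th element of $B$), so no Erd\H{o}s--Szekeres is needed, and in any case Erd\H{o}s--Szekeres on a length-$w$ permutation gives $\sqrt{w}$, not $w-1$.

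The paper's actual argument looks nothing like a two-layer $K_{w,w}$ with a staircase. It builds $P_w$ recursively from $P_{w-1}$, a gadget $R_{w-1}$ (a two-dimensional width-$(w{-}1)$ poset whose $L_1$ order forces a $\binom{w}{2}$-rainbow), their duals, and a connecting chain, arranged so that $P_w$ is self-dual. Self-duality lets one assume $a<b$ in the optimal extension, which pins down the relative position of the subposets. Then a dichotomy finishes the induction: a certain family of $r{+}1$ cover edges either contributes a $2$-rainbow nesting all of $P_{w-1}$ (so $\qn(P_w)\ge\qn(P_{w-1})+2\ge 2(w-1)$), or it contributes only a $1$-rainbow, which forces the elements of $R_{w-1}$ into their $L_1$ order and hence yields a $\binom{w}{2}+1\ge 2(w-1)$ rainbow. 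The two ingredients you are missing --- a concrete gadget with a provable worst-case order, and a mechanism (here self-duality plus the edge family $T$) that in \emph{every} linear extension either stacks two extra edges over the inductive rainbow or rigidifies the gadget --- are exactly what makes the proof work.
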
	

Observe that our construction and the proof of \cref{thm:lb} for $w=3$ is arguably much simpler
than the one of Alam et al.~\cite{Alam20}, which is based on a tedious case analysis. Thus, it can
be interesting on its own right.

Next we study the upper bound on the queue number of two-dimensional posets. Our result is the
following theorem, which is an improvement over the known $(w-1)^2+1$ bound of Alam et al.~\cite{Alam20}
for every $w>3$.

\begin{theorem}
	\label{thm:ub}
	Let $P$ be a two-dimensional poset with realizers $L_1, L_2$. 
	Then there is a layout of $P$ in at most $w(w+1)/2$ queues using either $L_1$ or $L_2$ as 
	the vertex order.
\end{theorem}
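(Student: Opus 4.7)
My plan is to decompose the elements of $P$ into $w$ chains $C_1,\ldots,C_w$ via Dilworth's theorem and label each cover edge $(u,v)$ (with $u<_{L_1} v$) by the unordered pair $\{c(u),c(v)\}$ of chain indices, where $c(v)$ is the chain containing $v$. There are exactly $\binom{w}{2}+w = w(w+1)/2$ such labels, so it suffices to show that no two cover edges sharing a label can nest.

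The verification splits into three subcases. First, if two edges are both intra-chain (in some $C_i$), a nesting $u<_{L_1}u'<_{L_1}v'<_{L_1}v$ forces all four elements into the same chain, hence to be totally $P$-ordered; then $u'$ is strictly between $u$ and $v$ in $P$, contradicting that $(u,v)$ is a cover edge. Second, if the two edges go between $C_i$ and $C_j$ in the \emph{same} direction (say smaller endpoint in $C_i$, larger in $C_j$, for both), applying the chain argument independently to $C_i$ and $C_j$ again exhibits an intermediate element for the outer edge. Third — the critical case — the two edges go between $C_i$ and $C_j$ in \emph{opposite} directions: $(u,v)$ with $u\in C_i,v\in C_j$ and $(u',v')$ with $u'\in C_j,v'\in C_i$, nested as $u<_{L_1}u'<_{L_1}v'<_{L_1}v$. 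Here the cover conditions on the two edges give, via chain monotonicity and the non-existence of intermediate elements, the $L_2$-ordering $L_2(u')<L_2(u)<L_2(v)<L_2(v')$, but this alone does not yet yield a contradiction.

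The main obstacle is precisely this third subcase, and it is where the two-dimensional structure has to be used (the bound fails for generic chain decompositions; a small example with $\pi=(2,1,4,3)$ shows that the naive first-fit decomposition admits nested opposite-direction edges). The plan is to exhibit a canonical chain decomposition tailored to the realizers — a natural candidate being one whose chains form a laminar family in their $L_2$-extents $[\min_{v\in C_i}L_2(v),\max_{v\in C_i}L_2(v)]$ — and to show that the $L_2$-ordering derived above is incompatible with the dominance rectangles spanned by such laminarly nested chains. Concretely, this forces an element of one chain to lie inside the rectangle of the edge of another, producing an intermediate in $P$ and contradicting a cover. If the laminar decomposition does not behave well for the order $L_1$, the symmetric construction using $L_2$ as vertex order does, which is exactly why the statement grants the choice of either realizer as the vertex order. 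Once the decomposition is in hand the counting is immediate: $\binom{w}{2}+w = w(w+1)/2$ queues suffice.
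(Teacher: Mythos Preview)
Your third subcase is not a gap you can close by choosing a clever chain decomposition: it is simply false in general, and the smallest witness already has a \emph{unique} chain decomposition, so no amount of canonicalisation helps.

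Take the two-dimensional poset on $\{1,\ldots,6\}$ with $L_1=(1,2,3,4,5,6)$ and $L_2=(3,4,1,6,2,5)$ (this is the paper's poset $R_2$). Its width is $2$, and the only partition into two chains is $C_1=\{1,2,5\}$, $C_2=\{3,4,6\}$. The cover edges $(1,6)$ and $(4,5)$ both carry the label $\{1,2\}$, go in opposite directions, and nest in $L_1$ (pattern $[1,4,5,6]$). They also nest in $L_2$ (pattern $[4,1,6,5]$). So neither realizer saves you, the laminarity hypothesis cannot be arranged (the $L_2$-extents $[3,6]$ and $[1,4]$ of the two chains necessarily overlap), and the unordered-pair labelling does not give a valid queue assignment.

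The paper does something genuinely different: it does \emph{not} partition the edges into $w(w{+}1)/2$ explicit queues. Instead it fixes an arbitrary chain decomposition, takes any rainbow $T$ in $L_1$, and bounds $|T|$ by a counting argument. The easy part (your subcases 1 and 2) shows $|T|\le w^2$. The hard part identifies two forbidden configurations in $T$---roughly, an opposite-direction pair between chains $i,j$ cannot coexist in $T$ with the uni-coloured edge of chain $i$, nor with a second opposite-direction pair touching chain $i$---both ruled out via the two-dimensionality property (the non-separating behaviour of $L_1$). A short pigeonhole on uni-coloured versus bi-coloured edges then forces $|T|\le w(w{+}1)/2$. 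So opposite-direction nested pairs \emph{are} allowed in a rainbow; they just crowd out other edges.
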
	

The paper is structured as follows. In \cref{sect:lb} we prove \cref{thm:lb} and 
in \cref{sect:ub} we prove \cref{thm:ub}. \cref{sect:conc} concludes the paper with interesting open questions. 

\begin{figure}[!ht]
	\begin{subfigure}[b]{.3\linewidth}
		\center
		\includegraphics[page=1]{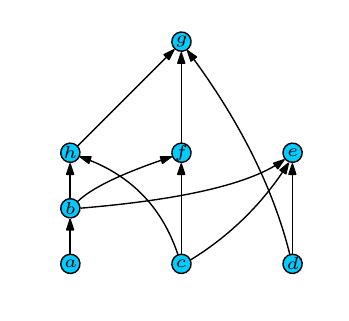}
		\caption{}
		\label{fig:introA}
	\end{subfigure}
	\begin{subfigure}[b]{.3\linewidth}
		\center
		\includegraphics[page=2]{pics/intro}
		\caption{}
		\label{fig:introB}
	\end{subfigure}    
	\begin{subfigure}[b]{.38\linewidth}
		\center
		\includegraphics[page=3]{pics/intro}
		\caption{}
		\label{fig:introC}
	\end{subfigure}
	\caption{A two-dimensional poset of width $3$, its dominance drawing, and a $2$-queue layout}
	\label{fig:intro}
\end{figure}

\section{An Upper Bound}
\label{sect:ub}

Consider a two-dimensional poset $P=(X, <)$ of width $w \ge 1$ with realizers $L_1$ and $L_2$. 
In this section we study queue layouts of $P$ using vertex orders $L_1$ or $L_2$, which we call \df{realizer-based}.
It is well-known that the elements of $P$ can be partitioned into $w$ \df{chains}, that is, subsets of pairwise comparable
elements. We fix such a partition and treat it as a function $\CC : X \rightarrow \{1, \dots, w\}$ 
such that if $\CC(u) = \CC(v)$ and $u \neq v$, then either $u < v$ or $v < u$.

We start with a property of a linear extension of a poset, whose proof follows directly
from the absence of transitive edges in $G_P$. Recall that $\prec$ indicates cover relations of $P$, 
that is, edges of $G_P$.

\begin{prop}
	\label{prop:bbb}
	A linear extension of a poset $P$ with chain partition $\CC$ does not contain pattern
	$[b_1, b_2, b_3]$, 
	where $\CC(b_1) = \CC(b_2) = \CC(b_3)$ and $b_1 \prec b_3$.
\end{prop}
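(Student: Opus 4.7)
The plan is to argue by contradiction. Suppose the linear extension $L$ contains the pattern $[b_1, b_2, b_3]$ with all three elements in the same chain, i.e.\ $\CC(b_1) = \CC(b_2) = \CC(b_3)$. By definition of the chain partition, the three elements are pairwise comparable in $P$.

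Next I would use the defining property of a linear extension: $L$ must respect all relations of $P$. Since $b_1$ and $b_2$ are comparable and $b_1 <_L b_2$, we must have $b_1 <_P b_2$ (otherwise $b_2 <_P b_1$ would force $b_2 <_L b_1$). By the same reasoning applied to $b_2$ and $b_3$, we obtain $b_2 <_P b_3$. Hence $b_1 <_P b_2 <_P b_3$ in the poset.

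Finally, I would invoke the definition of a cover relation: $b_1 \prec b_3$ means there is no element $c$ with $b_1 <_P c <_P b_3$. But $b_2$ is precisely such an element, so $b_1 \prec b_3$ cannot hold, contradicting the hypothesis. This contradiction completes the proof.

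No real obstacle is expected here; the statement is essentially the observation that cover edges in $G_P$ cannot be ``jumped over'' by a same-chain intermediate vertex in any linear extension. The proof is three short logical steps, and the only thing to be careful about is to explicitly invoke both (i) pairwise comparability within a chain and (ii) the linear-extension property to deduce $b_1 <_P b_2 <_P b_3$ before contradicting the cover relation.
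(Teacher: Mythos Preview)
Your argument is correct and is precisely the reasoning the paper has in mind: it states the proposition without a formal proof, remarking only that it ``follows directly from the absence of transitive edges in $G_P$.'' Your three steps (same-chain comparability, linear-extension compatibility, contradiction with the cover relation) simply spell out that one-line justification.
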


The next observation, whose proof is immediate, provides a crucial property of realizer-based linear 
extensions of two-dimensional posets.
In fact, a poset, $P$, admits a linear extension with such a property if and only if $P$ has dimension $2$;
see for example~\cite{DM41} where such linear extensions are called \df{non-separating}.

\begin{prop}
	\label{prop:abc}
	Consider a two-dimensional poset $P$ with realizers $L_1, L_2$ and chain partition $\CC$.
	Let $[a_1, b, a_2]$ be a pattern in $L_1$ (or $L_2$) with $\CC(a_1) = \CC(a_2)$.
	Then either $a_1 < b$ or $b < a_2$.
\end{prop}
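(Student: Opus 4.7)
The plan is to argue by contradiction: assume neither $a_1 < b$ nor $b < a_2$ holds in $P$, and derive a contradiction from the pattern $[a_1, b, a_2]$ together with the dimension-$2$ hypothesis. First I would observe that because $\CC(a_1) = \CC(a_2)$, the elements $a_1$ and $a_2$ lie in the same chain and are therefore comparable, and since $a_1$ precedes $a_2$ in the linear extension we must have $a_1 < a_2$ in $P$. This will be the anchor relation used later in $L_2$.

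Next I would enumerate the comparability status of $b$ with $a_1$ and $a_2$. Under the negation of the conclusion we have $a_1 \not< b$ and $b \not< a_2$, so each of the pairs $(a_1, b)$ and $(b, a_2)$ falls into one of two remaining options: either the wrong-direction comparability, or incomparability. The two wrong-direction cases are ruled out immediately because $L_1$ is a linear extension: $b < a_1$ would force $b$ to precede $a_1$ in $L_1$, and $a_2 < b$ would force $a_2$ to precede $b$ in $L_1$, both contradicting the pattern $[a_1, b, a_2]$. The same argument works if the pattern is assumed in $L_2$ instead.

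This leaves the case $b \parallel a_1$ and $b \parallel a_2$, which is where the dimension-$2$ assumption enters. For a two-dimensional poset, two incomparable elements must appear in opposite orders in $L_1$ and $L_2$. From the pattern in $L_1$ we have $a_1$ before $b$ and $b$ before $a_2$, so in $L_2$ the order must be reversed on each incomparable pair: $b$ precedes $a_1$ and $a_2$ precedes $b$. Transitivity of $L_2$ then gives $a_2$ before $a_1$ in $L_2$, contradicting the fact (established in the first step) that $a_1 < a_2$ in $P$ and $L_2$ is a linear extension.

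I do not expect a real obstacle here; the statement is essentially an unpacking of the definition of a two-dimensional realizer. The only point requiring a little care is making sure the argument is symmetric in $L_1$ and $L_2$ so that the proposition applies regardless of which realizer contains the pattern, which is handled automatically by the above since both the wrong-direction exclusion and the final $L_2$ contradiction use only the linear-extension property and the dimension-$2$ reversal rule.
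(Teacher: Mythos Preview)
Your argument is correct. The paper does not actually give a proof of this proposition; it declares the proof ``immediate'' and moves on, noting only that the property characterizes non-separating linear extensions. Your contradiction argument is exactly the natural unpacking the paper alludes to, so there is nothing to compare.
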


The next useful property in the section holds for realizer-based linear 
extensions of two-dimensional posets.

\begin{prop}
	\label{prop:abaab}
	Consider a two-dimensional poset $P$ with realizers $L_1, L_2$ and chain partition $\CC$.
	Then $L_1$ (or $L_2$) does not contain pattern $[a_1, b_2, a, a_2, b_1]$, where
	$\CC(a_1) = \CC(a_2) = \CC(a)$, $\CC(b_1) = \CC(b_2)$, and $a_1 \prec b_1$, $b_2 \prec a_2$.
\end{prop}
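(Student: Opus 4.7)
The plan is to assume the forbidden pattern appears in $L_1$ (the argument for $L_2$ is identical) and derive a contradiction via a single application of Proposition~\ref{prop:abc}. As a preliminary, I would record the poset-level consequences of the pattern sitting inside a linear extension: because $a_1, a, a_2$ lie on a common chain and appear in $L_1$ in that order, they are comparable and satisfy $a_1 <_P a <_P a_2$; likewise $b_2$ and $b_1$ belong to a common chain and appear in this order, so $b_2 <_P b_1$. The two cover relations $a_1 \prec b_1$ and $b_2 \prec a_2$ are given by hypothesis.

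Next, I would apply Proposition~\ref{prop:abc} to the three-element sub-pattern $[a_1, b_2, a]$, whose outer elements both lie on the $a$-chain, i.e.\ $\CC(a_1) = \CC(a)$. The proposition yields a dichotomy: either $a_1 <_P b_2$, or $b_2 <_P a$. In the first branch, transitivity with the chain relation $b_2 <_P b_1$ gives the poset chain $a_1 <_P b_2 <_P b_1$, exhibiting $b_2$ as an intermediate element between $a_1$ and $b_1$ and thereby contradicting the hypothesis that $a_1 \prec b_1$ is a cover relation. In the second branch, transitivity with $a <_P a_2$ produces $b_2 <_P a <_P a_2$, contradicting $b_2 \prec a_2$ in the same way.

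I do not expect any substantive obstacle: the only real choice in the proof is the selection of the sub-pattern fed into Proposition~\ref{prop:abc}. The sub-pattern $[a_1, b_2, a]$ is essentially forced by the geometry of the forbidden pattern, since $b_2$ is the unique element of the pattern that can furnish an obstruction to the cover $a_1 \prec b_1$ through the $b$-chain, and $a$ is the unique element that can obstruct $b_2 \prec a_2$ through the $a$-chain. Once this triple is identified, the two cases close immediately by transitivity.
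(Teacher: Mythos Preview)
Your proposal is correct and follows essentially the same route as the paper: both apply Proposition~\ref{prop:abc} to the sub-pattern $[a_1,b_2,a]$ and then use the chain relations $b_2 <_P b_1$ and $a <_P a_2$ to contradict the two cover relations $a_1 \prec b_1$ and $b_2 \prec a_2$, respectively. The only cosmetic difference is that the paper first argues $a_1 \parallel b_2$ (ruling out one branch in advance) before invoking Proposition~\ref{prop:abc}, whereas you state the dichotomy explicitly and close both branches; the logical content is identical.
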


\begin{proof}
	For the sake of contradiction, assume that $[a_1, b_2, a, a_2, b_1]$ is in $L_1$, with
	$\CC(a_1) = \CC(a_2) = \CC(a)$, $\CC(b_1) = \CC(b_2)$, and $a_1 \prec b_1$, $b_2 \prec a_2$.
	Notice that $a_1 \parallel b_2$, as otherwise we have $a_1 < b_2 < b_1$ and the edge $(a_1, b_1)$
	is transitive. Hence by \cref{prop:abc} applied to $[a_1, b_2, a]$, $b_2 < a$.
	Therefore, it holds that $b_2 < a < a_2$, which contradicts to non-transitivity of edge $(b_2, a_2)$.
\end{proof}	

Now we ready to prove the main result of the section.

\begin{proofof}{\cref{thm:ub}}
	Assume that poset $P$ is partitioned into $w$ chains, and consider a maximal rainbow, denoted $T$,
	induced by the order $L_1$. We need to prove that $|T| \le w (w+1) / 2$.
	
	First observe that the rainbow, $T$, does not contain two distinct edges $(a_1, b_1)$ and $(a_2, b_2)$
	with $\CC(a_1) = \CC(a_2)$ and $\CC(b_1) = \CC(b_2)$. Otherwise, the former edge nests the latter one and
	we have $a_1 < a_2 \prec b_2 < b_1$, which violates non-transitivity of $(a_1, b_1)$.
	Therefore, we already have $|T| \le w^2$. (This is the argument of Heath and Pemmaraju for their original 
	upper bound in~\cite{HP97})
	
	Next we show two more configurations that are absent in $T$:	
	\begin{enumerate}[(i)]
		\item For every pair of distinct chains, the rainbow does not contain edges 
		$(a_1, b_1)$, $(b_2, a_2)$, and $(a_3, a_4)$ with
		$\CC(a_1) = \CC(a_2) = \CC(a_3) = \CC(a_4)$ and $\CC(b_1) = \CC(b_2)$.
				
		For a contradiction, assume the rainbow contains the three edges. By \cref{prop:bbb},
		edge $(a_3, a_4)$ cannot cover elements $a_1$ or $a_2$. Thus, 		
		$L_1$ contains pattern $[a_1, b_2, a_3, a_4, a_2, b_1]$ or $[b_2, a_1, a_3, a_4, b_1, a_2]$.
		Both patterns violate \cref{prop:abaab}.
		
		\item For every triple of distinct chains, the rainbow does not contain edges
		$(a_1, b_1)$, $(b_2, a_2)$, $(a_3, c_3)$, and $(c_4, a_4)$ with 
		$\CC(a_1) = \CC(a_2) = \CC(a_3) = \CC(a_4)$, $\CC(b_1) = \CC(b_2)$, and $\CC(c_3) = \CC(c_4)$.
		
		For a contradiction, assume $T$ contains the four edges. Consider the inner-most edge in the rainbow; 
		without loss of generality, assume the edge is $(a_1, b_1)$. 
		Vertex $a_1$ is covered by two edges, $(a_3, c_3)$ and $(c_4, a_4)$, forming the 
		pattern of \cref{prop:abaab}; a contradiction.
	\end{enumerate}	

	Now observe that $T$ may contain at most $w$ \df{uni-colored} edges (that is, $(u, v)$ such that $\CC(u) = \CC(v)$)
	and at most $w(w-1)$ \df{bi-colored} edges (that is, $(u, v)$ such that $\CC(u) \neq \CC(v)$).
	
	On the one hand, if $T$ contains exactly $w$ uni-colored edges and $|T| > w (w+1) / 2$, then it must contain
	at least one pair of bi-colored edges $(a_1, b_1)$, $(b_2, a_2)$ with $\CC(a_1) = \CC(a_2)$, $\CC(b_1) = \CC(b_2)$.
	Together with the uni-colored edge from chain $\CC(a_1)$, the triple forms the forbidden configuration~$(i)$.
	
	On the other hand, if $T$ contains at most $w-1$ uni-colored edges and $|T| > w (w+1) / 2$, then
	$T$ contains two pairs of bi-colored edges, as in configuration~$(ii)$; a contradiction.
	
	This completes the proof of the theorem.
\end{proofof}
	
Notice that the bound of \cref{thm:ub} is worst-case optimal, as we show next.

\begin{lemma}
	\label{lm:worst_case}
	There exists a two-dimensional poset of width $w \ge 1$, denoted $R_w$, with realizers $L_1, L_2$
	such that its layout with vertex order $L_1$ contains a $\big(w(w+1)/2\big)$-rainbow.
\end{lemma}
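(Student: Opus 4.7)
The plan is to construct $R_w$ by induction on $w$. The base case $R_1$ is a two-element chain with dominance-drawing coordinates $(1,1)$ and $(2,2)$, so the single cover edge is trivially a $1$-rainbow. For the inductive step I assume $R_{w-1}$ is a two-dimensional poset of width $w-1$, with $w-1$ chains $C_1,\ldots,C_{w-1}$ of size $w$ each, realizers $L_1,L_2$, and a $w(w-1)/2$-rainbow in the $L_1$-layout. I then obtain $R_w$ by wrapping a new chain $C_w$ around $R_{w-1}$ and adding $w$ new cover edges that nest outside the inductive rainbow, yielding a rainbow of size $w(w-1)/2 + w = w(w+1)/2$.

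Concretely, I embed $R_{w-1}$ in the dominance drawing of $R_w$ at $L_1$-positions $w+1,\ldots,w^2$ and $L_2$-positions $1,\ldots,w(w-1)$, so that the inductive structure is preserved and merely shifted. Around this central block I place $2w$ new elements. On the left, at $L_1$-positions $1,\ldots,w$, go the new elements $c_w^1,\ldots,c_w^w$ of $C_w$, which serve as the left endpoints of the $w$ new edges. On the right, at $L_1$-positions $w^2+1,\ldots,w(w+1)$, go one more element $c_w^{w+1}$ of $C_w$ together with, for each $i=1,\ldots,w-1$, a new element $c_i^{\mathrm{new}}$ of the existing chain $C_i$. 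The endpoints of the new edges are paired mirror-symmetrically in $L_1$: for $\ell=1,\ldots,w-1$, the level-$\ell$ bi-colored edge joins $c_w^\ell$ at $L_1$-position $\ell$ with $c_\ell^{\mathrm{new}}$ at $L_1$-position $w(w+1)+1-\ell$, and the level-$w$ uni-colored edge joins $c_w^w$ at position $w$ with $c_w^{w+1}$ at position $w^2+1$. In $L_2$, the $2w$ new elements occupy the top positions $w(w-1)+1,\ldots,w(w+1)$, arranged so that the two endpoints of the level-$\ell$ new edge occupy the consecutive $L_2$-positions $w(w-1)+2\ell-1$ and $w(w-1)+2\ell$.

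Four verifications complete the argument. Each chain is a chain: $c_i^{\mathrm{new}}$ has both coordinates larger than every old element of $C_i$, and the $C_w$-elements are consistently ordered in both $L_1$ and $L_2$ by construction. Every $R_{w-1}$ cover edge remains a cover in $R_w$ because no new element has $L_1$-position in $[w+1,w^2]$ and therefore none lies inside an $R_{w-1}$-rectangle. The $w$ new edges nest around the inductive rainbow in $L_1$ by the mirror pairing, giving the desired $w(w+1)/2$-rainbow. Finally, the width of $R_w$ is exactly $w$: the $w$-chain partition gives the upper bound, and extending any size-$(w-1)$ antichain of $R_{w-1}$ by $c_w^1$ gives a size-$w$ antichain, since $c_w^1$ has the smallest $L_1$-coordinate but an $L_2$-coordinate above every $R_{w-1}$-element. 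The main technical point, and the idea guiding the whole construction, is the interleaved $L_2$-placement of the new elements: each new edge's $L_2$-interval has length exactly one, so its dominance rectangle is empty and the edge is automatically a cover, avoiding any case analysis on transitive paths.
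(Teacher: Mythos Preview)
Your construction is essentially identical to the paper's: the paper also places $R_{w-1}$ in the middle of $L_1$ and at the bottom of $L_2$, adds $2w$ new elements at $L_1$-positions $1,\dots,w$ and $n+w+1,\dots,n+2w$, and interleaves the new elements at the top of $L_2$ as $(1,\,n+2w,\,2,\,n+2w-1,\,\dots,\,w,\,n+w+1)$ so that each of the $w$ new cover edges has an $L_2$-interval of length one. The only difference is that you make the chain partition and the verification of width, covers, and nesting explicit, whereas the paper leaves these as ``easy to verify''; the underlying idea and the resulting poset are the same.
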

	
\begin{proof}
	The poset $R_w$ is built recursively. For $w=1$, the poset consists of two comparable elements.
	For $w > 1$, we assume that $R_{w-1}$ is constructed and described by realizers $L_1^{w-1}$ and 
	$L_2^{w-1}$. The poset $R_w$ is constructed from $R_{w-1}$ by adding $2w$ elements. 
	Assume $|L_1^{w-1}| = n$ and the elements of $R_{w-1}$ are indexed by $w+1, \dots, w+n$. 
	We set $L_1^w$ to the identity permutation and use
	\[
	L_2^w = L_2^{w-1} \SCon (1, n+2w, 2, n+2w-1, \dots, w, n+w+1),
	\]
	where $\cup$ denotes the concatenation of the two orders. \cref{fig:rw} illustrates the construction.
	It is easy to verify that the width of the new poset is exactly $w$.
	Observe that in the layout of $R_w$ with order $L_1^w$, edges $(1, n+2w), \dots, (w, n+w+1)$ 
	form a $w$-rainbow and nest all edges of $R_{w-1}$. Therefore, the layout contains a $\big(w(w+1)/2\big)$-rainbow, as claimed.
\end{proof}

\begin{figure}[!tb]
	\center
	\includegraphics[page=4]{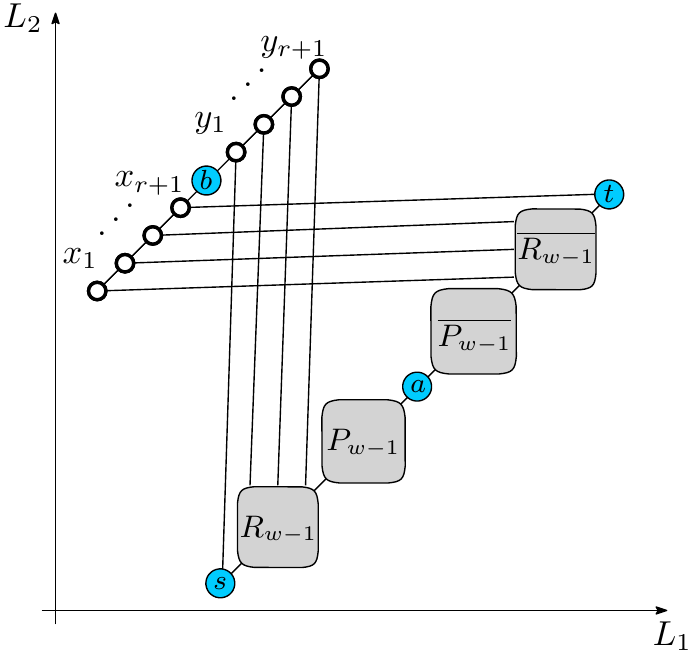}
	\caption{A 2-dimensional poset of width $w\ge 1$, $R_w$, with a realizer-based order 
		containing a $\big(w(w+1)/2\big)$-rainbow, which is comprised of $w$ thick edges that nest all edges of $R_{w-1}$}
	\label{fig:rw}
\end{figure}

We remark that \cref{lm:worst_case} provides a poset whose queue layout with one of its realizers
contains a $\big(w(w+1)/2\big)$-rainbow. It is straightforward to extend the construction
(by concatenating $R_w$ with its dual) so that both realizer-based vertex orders yield a rainbow of that size. 
However, the queue number of the poset (and the proposed extension) is at most $w$, which is achieved 
with a different, non-realizer-based, vertex order. Thus, a more delicate construction is needed to force
a larger rainbow in every linear extension of a poset.

\section{A Lower Bound}
\label{sect:lb}

In this section we provide a new counter-example to the conjecture of Heath and Pemmaraju~\cite{HP97}
by describing a two-dimensional poset of width $w \ge 3$ whose queue number exceeds $w$.
The poset, denoted $P_w$, is constructed recursively. The base case, $P_2$, is a four-element poset
with $L_1 = (1, 2, 3, 4)$ and $L_2 = (2, 1, 4, 3)$; see \cref{fig:lb_p2}.
The step of the construction is illustrated in \cref{fig:lb_pw}. 
Poset $P_w$ consists of a copy of $P_{w-1}$, a copy of the poset $R_{w-1}$ utilized
in \cref{lm:worst_case}, the duals of the two posets, and a chain of additional elements. 
Recall that the \df{dual} of a poset, $P$, is the poset, $\overline{P}$, on the same set of elements such 
that $x < y$ in $P$ if and only if $y < x$ in $\overline{P}$ for every pair of the elements $x$ and $y$.

We now formally describe the construction. Denote by $L_1(P), L_2(P)$ the two realizers of a two-dimensional poset $P$.
Let $\cup$ denote the concatenation of two sequences, and
let $(x_1, x_2, \dots) \uplus (y_1, y_2, \dots)$ denote the \df{interleaving} of two equal-length
sequences, that is, $(x_1, y_1, x_2, y_2, \dots)$.
Assume that $R_{w-1}$ contains $r$ elements. Then we set
\begin{align*}
	L_1(P_w) =~~ & (x_1, \dots, x_{r+1}) \SCon b \SCon s \SCon y_1 \SCon \big(L_1(R_{w-1}) \uplus (y_2, \dots, y_{r+1}) \big) \SCon \\
	& L_1(P_{w-1}) \SCon a \SCon L_1(\overline{P_{w-1}}) \SCon L_1(\overline{R_{w-1}}) \SCon t, \text{~and~}\\
	L_2(P_w) =~~ & s \SCon L_2(R_{w-1}) \SCon L_2(P_{w-1}) \SCon a \SCon L_2(\overline{P_{w-1}}) \SCon \\
	& \big( (x_1, \dots, x_r) \uplus L_2(\overline{R_{w-1}}) \big) \SCon x_{r+1} \SCon t \SCon b \SCon (y_1, \dots, y_{r+1}).
\end{align*}

\begin{figure}[!tb]
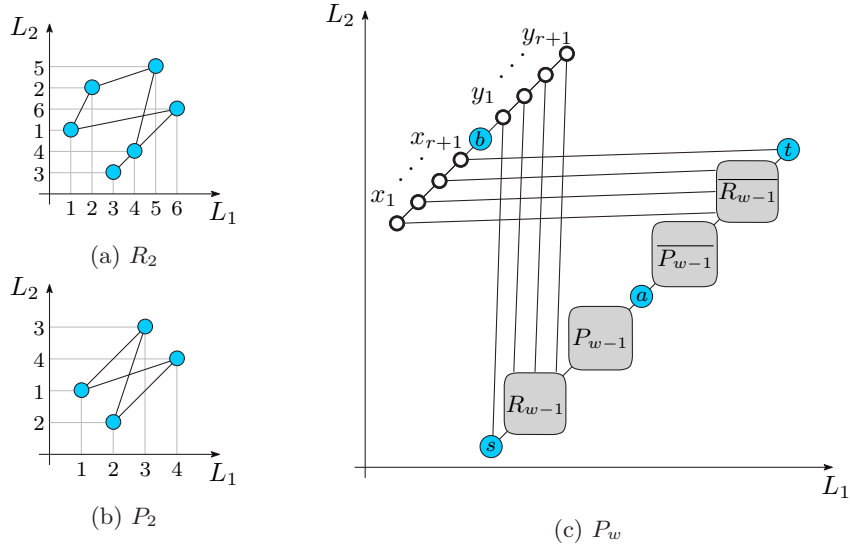

	\begin{minipage}[c][][t]{.4\textwidth}
		\centering
		\includegraphics[page=2]{lower_bound}
		\subcaption{$R_2$}
		\label{fig:lb_r2}\par\vfill
		\includegraphics[page=3]{lower_bound}
		\subcaption{$P_2$}
		\label{fig:lb_p2}
	\end{minipage}
	\begin{minipage}[c][][t]{.6\textwidth}
		\centering
		\includegraphics[page=1]{lower_bound}
		\subcaption{$P_w$}
		\label{fig:lb_pw}
	\end{minipage}%
	\caption{A counter-example to the conjecture of Heath and Pemmaraju~\cite{HP97}: A~two-dimensional poset, $P_w$, 
		of width $w \ge 3$ with queue number exceeding $w$}
	\label{fig:lb}
\end{figure}

We refer to \cref{fig:lb} for the illustration of the construction and to \cref{fig:lb3} for the instance of $P_3$.
Now we prove that the constructed poset has queue number at least $2w-2$.


\begin{proofof}{\cref{thm:lb}}
	It is easy to verify that the constructed poset, $P_w$, is two-dimensional and has width exactly 
	$w$. Furthermore, the poset is dual to itself, that is, $P_w = \overline{P_w}$ with $a$ and $b$ 
	being the fixed points. Thus, we may assume that in the linear extension corresponding to the
	optimal queue layout of the poset, element $a$ precedes $b$ and we have $s < \dots < a < b < y_1 < \dots < y_{r+1}$. 
	Next we consider the queue layout induced
	by the elements $s$, $R_{w-1}$, $P_{w-1}$, $a$, and $y_1, \dots, y_{r+1}$; see \cref{fig:lb_lin}.
	
	We prove the theorem by induction. For $w=2$, the claim holds trivially. For $w > 2$, we assume
	that $\qn(P_{w-1}) \ge 2(w-2)$ and distinguish two cases depending on the size of the maximum
	rainbow, $T$, formed by edges $(s, y_1)$, $(v_1, y_2), \dots, (v_r, y_{r+1})$, where
	$v_i, 1 \le i \le r$ are elements of $R_{w-1}$:
	\begin{itemize}
		\item if $|T| \ge 2$, then $\qn(P_w) \ge \qn(P_{w-1}) + |T| \ge 2(w-1)$, as all edges of $P_{w-1}$
		are nested by edges of $T$;
		
		\item if $|T| = 1$, then the elements of $R_{w-1}$ must appear in the order induced by
		$L_1(R_{w-1})$, since otherwise at least two of the edges of $T$ nest. By \cref{lm:worst_case}, 
		the edges of $R_{w-1}$ form a $\big(w(w-1)/2\big)$-rainbow. The rainbow is covered by edge $(s, y_1)$, 
		which yields $\qn(P_w) \ge \big(w(w-1)/2\big) + 1 \ge 2(w-1)$ for $w \ge 3$.
	\end{itemize}

	This completes the proof of \cref{thm:lb}.
\end{proofof}
	
\begin{figure}[!tb]
	\center
	\includegraphics[page=6]{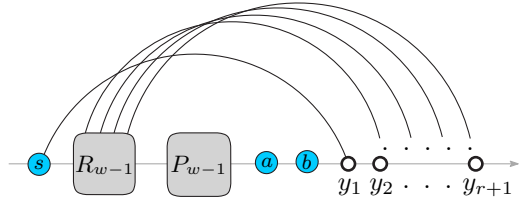}
	\caption{A linear extension of poset $P_w$ for the proof of \cref{thm:lb} in which $a < b$}
	\label{fig:lb_lin}
\end{figure}
	
\section{Conclusions}
\label{sect:conc}
We disproved the conjecture of Heath and Pemmaraju for two-dimensional posets and answered a
question posed by Felsner et al.~\cite{FUW21}. A number of intriguing problems in the area 
remain unsolved. 

\begin{itemize}
	\item Is it possible to get a subquadratic upper bound on the queue number of two-dimensional
	posets of width $w$? A poset of Felsner et al.~\cite{FUW21} that requires $w^2 / 8$ queues
	in every linear extension is not two-dimensional, which leaves a hope for an asymptotically
	stronger result than the one given by \cref{thm:ub}.
	
	\item What is the queue number of two-dimensional posets of width $3$? By \cref{thm:lb} and the
	result of Alam et al.~\cite{Alam20}, the value is either $4$ or $5$.
	
	\item Queue layouts of graphs are closely related to so-called \df{track layouts}, which
	are connected with the existence of low-volume three-dimensional graph drawings~\cite{DPW04,Pup20}.
	In particular, every $t$-track (undirected) graph has a $(t-1)$-queue layout, and every $q$-queue 
	(undirected) graph has track number at most $4q \cdot 4q^{(2q-1)(4q-1)}$. 
	We think it is interesting to study the relationship between the two concepts for directed graphs
	and posets.
\end{itemize}

\paragraph*{Acknowledgments.}	
We thank Jawaherul Alam, Michalis Bekos, Martin Gronemann, and Michael Kaufmann for 
fruitful initial discussions of the problem.

\bibliographystyle{splncs04}
\bibliography{refs}

\newpage
\appendix

\section*{Additional Illustrations}

\begin{figure}[!h]
	\center
	\includegraphics[page=5]{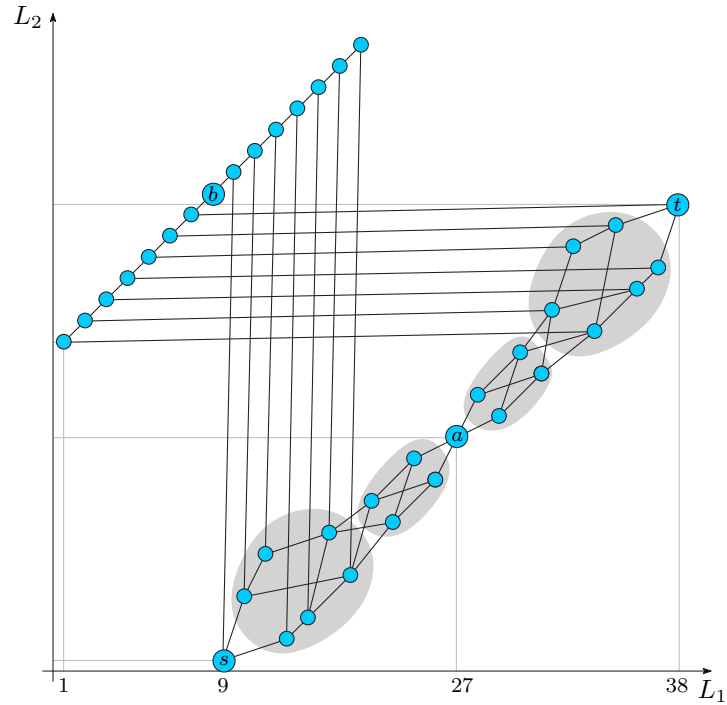}
	\caption{A two-dimensional poset with $38$ elements and width $3$. The queue number of the 
		poset is exactly $4$; the lower bound is shown in \cref{thm:ub}, and the upper bound
	is verified computationally via an open source SAT-based solver available at \url{http://be.cs.arizona.edu}}
	\label{fig:lb3}
\end{figure}

\end{document}